\newtheorem{thm}{Theorem}[section]
\newtheorem{cor}[thm]{Corollary}
\newtheorem{lem}[thm]{Lemma}
\newtheorem{prop}[thm]{Proposition}
\theoremstyle{definition}
\theoremstyle{remark}
\numberwithin{equation}{section}
\begin{document}

\title[]{Integral Van Vleck's and Kannappan's  functional equations on semigroups}%
\author{ Elqorachi Elhoucien}%
\address{Ibn Zohr University, Faculty of Sciences, Department of Mathematics, Agadir, Morocco}
\email{elqorachi@hotamail.com}%
\thanks{2010 Mathematics Subject Classification: 39B32, 39B52}%
\keywords{semigroup; d'Alembert's  equation; Van Vleck's equation;
Kannappan's equation; involution; multiplicative function; complex
measure}

\begin{abstract}
In this paper we study  the solutions of the integral Van Vleck's
functional equation for the sine
$$\int_{S}f(x\tau(y)t)d\mu(t)-\int_{S}f(xyt)d\mu(t) =2f(x)f(y),\; x,y\in S$$ and  the integral
Kannappan's functional equation
$$\int_{S}f(xyt)d\mu(t)+\int_{S}f(x\tau(y)t)d\mu(t) =2f(x)f(y),\;
x,y\in S,$$ where $S$ is a semigroup, $\tau$ is an involution of $S$
and $\mu$  is a measure that is linear combinations of point
measures $(\delta_{z_{i}})_{i\in I}$, such that for all $i\in I$,
$z_{i}$ is  contained in the center of $S$. \\ We express the
solutions of the first equation by means of multiplicative functions
on $S$, and we prove that the solutions of the second equation are
closely related to the solutions of the classic d'Alembert's
functional equation with involution.
\end{abstract}
\maketitle
\section{Introduction}Throughout this paper $S$ denotes a semigroup, and $\tau$ :
$S\longrightarrow S$ is an involution of $S$. That is
$\tau(xy)=\tau(y)\tau(x)$ and $\tau(\tau(x))=x$ for all $x,y\in S$.
If $\mu$ denotes a discrete complex  measure, we say that $\mu$ is
$\tau$-invariant and we write $\mu=\tau(\mu)$ if
$\int_{S}f(\tau(t))d\mu(t)=\int_{S}f(t)d\mu(t)$
 for all complex-valued continous and bounded function $f$ on a topological semigroup $S$. A function $\chi$ : $S\longrightarrow \mathbb{C}$ is a
multiplicative function if $\chi(xy)=\chi(x)\chi(y)$ for all $x,y\in
S$.\\ In 2003, Elqorachi and Akkouchi  \cite{akk2} introduced and
studied the bounded and  continuous solutions
 $f\neq 0$ of the
following generalized d'Alembert's functional equation
\begin{equation}\label{eq1}
\int_{G}f(xty)d\mu(t)+\int_{G}f(xt\tau(y))d\mu(t) = 2f(x)f(y),\;x,y
\in G\end{equation} on a topological group $G$.  They proved that
under the conditions that
$\mu=\tau(\mu)$ and $f$ satisfies the Kannappan's condition\\
$\int_{G}\int_{G}f(xtysz)d\mu(t)d\mu(s)
=\int_{G}\int_{G}f(ytxsz)d\mu(t)d\mu(s)$,  $x,y,z\in G$, there
exists a generalized $\mu$-spherical function  $\psi$:
$G\longrightarrow \mathbb{C}$:
\begin{equation}\label{eqfunction}
\int_{G}\psi(xty)d\mu(t)=\psi(x)\psi(y), \;x,y\in G\end{equation}
such that $f(x)=\frac{\psi(x)+\psi(\tau(x))}{2}$ for all $x\in G$.\\
$\mu-$spherical function and related topics are studied  in
\cite{akk,akk9}
\\In the particular case when $\mu=\delta_{z_0}$ is the Dirac measure,
the functional equation (\ref{eq1}) reduces to Kannappan's
functional equation \cite{K}
\begin{equation}\label{eq2}
f(xz_0y)+f(xz_0\tau(y)) = 2f(x)f(y),\;x,y \in S.\end{equation}
Kannappan proved that any solution  $f$: $\mathbb{R}\longrightarrow
\mathbb{C}$ of (\ref{eq2}) with $\tau(y)=-y$ for all $y\in
\mathbb{R}$  is periodic, if $z_0\neq 0$. Furthermore, the periodic
solutions  has the form $f(x)=g(x-z_0)$ where $g$ is a periodic
solution of d'Alembert functional equation
\begin{equation}\label{eq3}
g(x+y)+g(x-y) = 2g(x)g(y),\;x,y \in \mathbb{R}.\end{equation}
Perkins and Sahoo \cite{P} studied the following version of
Kannappan's functional equation
\begin{equation}\label{eq4}
f(xyz_0)+f(xy^{-1}z_0) = 2f(x)f(y),\;x,y \in S\end{equation} on
groups. They found the form of any abelian solution $f$ of
(\ref{eq4}).\\
Recently, Stetk\ae r \cite{stkan} took $z_0$ in the center and
expressed the complex-valued solutions of Kannappan's functional
equation (\ref{eq2}) on semigroups in terms of solutions of
d'Alembert's functional equation
\begin{equation}\label{eq6} g(xy)+g(x\tau(y)) = 2g(x)g(y),\;x,y \in
S,\end{equation} The  complex-valued solutions of (\ref{eq6})  are
formuled by Davison \cite{davison} on monoids that need not be
commutative.\\Stetk\ae r [9, Exercise 9.18] found the complex-valued
solution of Van Vleck's functional equation
\begin{equation}\label{eq7} f(xy^{-1}z_0)-f(xyz_0) =
2f(x)f(y),\;x,y \in G,\end{equation}  when $G$ is a  not necessarily
abelian group and $z_0$ is a fixed element in the center of $G$. We refer also to \cite{V1} and \cite{V2}. \\
Perkins and Sahoo \cite{P} replaced the group inversion by an
involution $\tau$: $G\longrightarrow G$ and they obtained the
abelian, complex-valued solutions of equation
\begin{equation}\label{eq8} f(x\tau(y)z_0)-f(xyz_0) = 2f(x)f(y),\;x,y
\in G.\end{equation}  Stetk\ae r \cite{St3} extends the results of
Perkins and Sahoo \cite{P} about equation (\ref{eq7}) to the  case
where $G$ is a semigroup and the solutions are not assumed to be
abelian.\\The  main purpose of this paper is to extend  Stetk\ae r's
results  \cite{St3,stkan} to the following generalizations of Van
Vleck's functional equation for the sine\begin{equation}\label{eq09}
\int_{S}f(x\tau(y)t)d\mu(t)-\int_{S}f(xyt)d\mu(t) =2f(x)f(y), x,y\in
S,\end{equation}  and of Kannappan's functional equation
\begin{equation}\label{eq010}
\int_{S}f(xyt)d\mu(t)+\int_{S}f(x\tau(y)t)d\mu(t) =2f(x)f(y), x,y\in
S,\end{equation} where $S$ is a semigroup, $\tau$ is an involution
of $S$ and $\mu$ is a measure that is linear combinations of point
measures $(\delta_{z_{i}})_{i\in I}$, with $z_{i}$  contained in the center of $S$, for all $i\in I$.\\
We express the solutions of (\ref{eq09}) in terms of multiplicative
functions on $S$ and we prove that the solutions of (\ref{eq010})
are closely related to the solutions of the classic d'Alembert's
functional equation (\ref{eq6}).
\section{The Solutions of the integral Van Vleck's functional equation on semigroups}
In this section we obtain the complex-valued solutions of the
integral Van Vleck's functional equation   (\ref{eq09}) on
semigroups. \\The following lemmas will be used later. They  are
generalizations of   Stetk\ae r' lemmas obtained in \cite{St3} for
$\mu=\delta_{z_0}$, where $z_0$ is a fixed element in the center of
the semigroup $S$.
\begin{lem} Let $S$ be a semigroup with an involution $\tau$: $S\longrightarrow S$.
Let $\mu$ be a complex measure with support contained in the center
of $S$. Let $f$ be a non-zero solution of equation (\ref{eq09}).
Then for all $x\in S$ we have
\begin{equation}\label{eq77}
    f(x)=-f(\tau(x)),
\end{equation}
\begin{equation}\label{eq88}
    \int_{S}f(t)d\mu(t)\neq 0,
\end{equation}
\begin{equation}\label{eq99'}
    \int_{S}\int_{S}f(ts)d\mu(t)d\mu(s)=\int_{S}\int_{S}f(\tau(t)s)d\mu(t)d\mu(s)=0,
\end{equation}
\begin{equation}\label{eq99}
    \int_{S}\int_{S}f(x\tau(t)s)d\mu(t)d\mu(s)=f(x)\int_{S}f(t)d\mu(t),
\end{equation}
\begin{equation}\label{eq100}
    \int_{S}\int_{S}f(xts)d\mu(t)d\mu(s)=-f(x)\int_{S}f(t)d\mu(t),
\end{equation}
\begin{equation}\label{eq111}
   \int_{S}f(\tau(x)t)d\mu(t)=\int_{S}f(xt)d\mu(t).
\end{equation}
 \end{lem}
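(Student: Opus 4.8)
The plan is to introduce the averaging operators
\[
F(x):=\int_{S}f(xt)\,d\mu(t),\qquad F^{*}(x):=\int_{S}f(x\tau(t))\,d\mu(t),
\]
and, since the support of $\mu$ lies in the center of $S$, to rewrite \eqref{eq09} in the compact form $F(x\tau(y))-F(xy)=2f(x)f(y)$. From here \eqref{eq77} is immediate: replacing $y$ by $\tau(y)$ in \eqref{eq09} and adding to the original gives $2f(x)\bigl(f(y)+f(\tau(y))\bigr)=0$, and since $f\neq 0$ we obtain $f(\tau(x))=-f(x)$. Using \eqref{eq77} together with the symmetry of the double integral I can also dispatch the second equality in \eqref{eq99'} at once: writing $c_{2}:=\int_{S}\int_{S}f(\tau(t)s)\,d\mu(t)\,d\mu(s)$, the relation $f(\tau(t)s)=-f(\tau(s)t)$ and the interchange $t\leftrightarrow s$ give $c_{2}=-c_{2}$, hence $c_{2}=0$.

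The heart of the matter, and what I expect to be the main obstacle, is \eqref{eq111}. Exploiting that the right-hand side $2f(x)f(y)$ is symmetric in $x,y$, I would first record $F(x\tau(y))-F(xy)=F(y\tau(x))-F(yx)$; rewriting $F(y\tau(x))$ and $F(yx)$ by means of \eqref{eq77} and the centrality of the support then collapses this to
\[
\bigl(F-F\circ\tau\bigr)(x\tau(y))=\int_{S}\bigl[f(xyt)-f(yxt)\bigr]\,d\mu(t),
\]
so that \eqref{eq111}, i.e. $F\circ\tau=F$, is equivalent to the vanishing of this ``commutator average''. This is the delicate step. I expect to establish it by integrating the functional equation in a shifted variable (replacing $x$ by $xt$, respectively $y$ by $yt$, and integrating $d\mu(t)$) and matching the two resulting identities against the symmetry relation just derived; centrality of the point masses is precisely what lets the inserted $t$ be moved freely so that the two expressions cancel. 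Phrased through the operators, the upshot is the clean reformulation $F^{*}=-F$, which is \eqref{eq111}.

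Granting \eqref{eq111}, the remaining identities follow quickly. Set $P(x):=\int_{S}\int_{S}f(x\tau(s)t)\,d\mu(s)\,d\mu(t)$ and $Q(x):=\int_{S}\int_{S}f(xts)\,d\mu(t)\,d\mu(s)=\int_{S}F(xt)\,d\mu(t)$. Integrating $P$ first in $s$ and moving the central $t$ past $\tau(s)$ shows the inner value to be $F^{*}(xt)$, whence $P(x)=\int_{S}F^{*}(xt)\,d\mu(t)=-\int_{S}F(xt)\,d\mu(t)=-Q(x)$ by \eqref{eq111}. On the other hand, integrating $F(x\tau(y))-F(xy)=2f(x)f(y)$ over $y$ against $\mu$ gives $P(x)-Q(x)=2f(x)\int_{S}f\,d\mu$. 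Solving the two relations yields $Q(x)=-f(x)\int_{S}f\,d\mu$ and $P(x)=f(x)\int_{S}f\,d\mu$, which are \eqref{eq100} and \eqref{eq99}. The first equality in \eqref{eq99'} then drops out: with $c_{1}:=\int_{S}\int_{S}f(ts)\,d\mu(t)\,d\mu(s)=\int_{S}F(t)\,d\mu(t)$ and $c_{2}=\int_{S}F(\tau(t))\,d\mu(t)$, relation \eqref{eq111} forces $c_{1}=c_{2}=0$.

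Finally, \eqref{eq88} will come by contradiction. If $\int_{S}f\,d\mu=0$, then \eqref{eq100} forces $Q\equiv 0$. Choosing $y_{0}$ with $f(y_{0})\neq 0$ and solving \eqref{eq09} for $f$ gives $f(x)=\tfrac{1}{2f(y_{0})}\bigl(F(x\tau(y_{0}))-F(xy_{0})\bigr)$; averaging this against $\mu$ and using centrality to absorb the inserted variable turns the right-hand side into $\tfrac{1}{2f(y_{0})}\bigl(Q(x\tau(y_{0}))-Q(xy_{0})\bigr)=0$, so $F\equiv 0$. But then $2f(x)f(y)=F(x\tau(y))-F(xy)=0$ for all $x,y$, forcing $f\equiv 0$, contrary to hypothesis; hence $\int_{S}f\,d\mu\neq 0$. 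I anticipate that essentially all the difficulty is concentrated in \eqref{eq111}: once the commutator average is controlled, the other five relations reduce to routine manipulations with the operators $F$ and $F^{*}$.
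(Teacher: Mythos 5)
There is a genuine gap, and it sits exactly where you flagged it: \eqref{eq111} is never proved. Your proposal makes \eqref{eq111} the logical source of \eqref{eq99}, \eqref{eq100}, \eqref{eq88}, and the first half of \eqref{eq99'}, but for \eqref{eq111} itself you only offer a plan (``I expect to establish it by integrating the functional equation in a shifted variable \dots''). Carrying that plan out is not routine. The shifted substitutions $x\mapsto xt$ and $y\mapsto yt$ give, in your notation, $Q(x\tau(y))-Q(xy)=2f(y)F(x)$ and $P(x\tau(y))-Q(xy)=2f(x)F(y)$, and combining these with $P-Q=2f\int_S f\,d\mu$ leads to $f(x\tau(y))\int_S f\,d\mu=f(x)F(y)-f(y)F(x)$; averaging this in $x$ yields $\bigl(F(\tau(y))-F(y)\bigr)\int_S f\,d\mu=-f(y)\int_S\int_S f(ts)\,d\mu(t)\,d\mu(s)$. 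So to conclude $F\circ\tau=F$ you need precisely the two facts you postponed until after \eqref{eq111}: that $\int_S f\,d\mu\neq 0$ (i.e.\ \eqref{eq88}) and that $\int_S\int_S f(ts)\,d\mu(t)\,d\mu(s)=0$ (the first half of \eqref{eq99'}). Your architecture is therefore circular as it stands. There is also a smaller logical slip: the vanishing of the commutator average $F(xy)-F(yx)$ is \emph{not} equivalent to \eqref{eq111} on a general semigroup, since your symmetry identity only controls $F-F\circ\tau$ on elements of the form $x\tau(y)$, and such products need not exhaust $S$.

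The paper's proof runs in the opposite direction, and that ordering is what makes everything close. The key move you are missing is the substitution $x=\tau(s)$ in \eqref{eq09}: together with \eqref{eq77}, a renaming of the two integration variables, and centrality of the support of $\mu$, this yields \eqref{eq99} directly, with no appeal to \eqref{eq111} and no division by $\int_S f\,d\mu$. From \eqref{eq99} one gets \eqref{eq100} by setting $y=s$ and integrating; \eqref{eq88} then follows by the contradiction argument (essentially the one you give, but now legitimately available); \eqref{eq99'} follows from a quadruple-integral computation using \eqref{eq100} and division by $\int_S f\,d\mu$; and \eqref{eq111} drops out \emph{last}, by substituting $x\mapsto st$, integrating, and using \eqref{eq99'} to kill the right-hand side and \eqref{eq100} to evaluate the left. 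Your correct pieces --- \eqref{eq77}, the antisymmetry argument giving $\int_S\int_S f(\tau(t)s)\,d\mu(t)\,d\mu(s)=0$, and the conditional derivations from \eqref{eq111} --- are all recoverable, but the proof must be reorganized so that \eqref{eq111} is the terminal consequence rather than the starting point.
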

\begin{proof}Replacing $y$ by $\tau(y)$ in the functional equation (\ref{eq09}) and using $\tau(\tau(y))=y$ we
get
$$\int_{S}f(xyt)d\mu(t)-\int_{S}f(x\tau(y)t)d\mu(t)=2f(x)f(\tau(y))$$$$=-[\int_{S}f(x\tau(y)t)d\mu(t)-\int_{S}f(xyt)d\mu(t)]=-2f(x)f(y),$$
which implies the  formula (\ref{eq77}).\\
 By replacing
$x$ by $\tau(s)$ in (\ref{eq09}) and using (\ref{eq77}) we have
\begin{equation}\label{eq112}
\int_{S}f(\tau(s)\tau(y)t)d\mu(t)-\int_{S}f(\tau(s)yt)d\mu(t)=-2f(s)f(y)\end{equation}
for all $x,s\in S.$ By integrating the two members of equation
(\ref{eq112}) with respect to $s$ we obtain
\begin{equation}\label{eq113}
\int_{S}\int_{S}f(\tau(s)\tau(y)t)d\mu(s)d\mu(t)-\int_{S}\int_{S}f(\tau(s)yt)d\mu(s)d\mu(t)\end{equation}$$=-2f(x)\int_{S}f(s)d\mu(s).$$
By using (\ref{eq77})  we find
$$\int_{S}\int_{S}f(\tau(s)\tau(y)t)d\mu(s)d\mu(t)=-\int_{S}\int_{S}f(\tau(t)ys)d\mu(s)d\mu(t)$$
$$=-\int_{S}\int_{S}f(\tau(t)sy)d\mu(s)d\mu(t)$$
so, we obtain
$$-2\int_{S}\int_{S}f(\tau(t)sy)d\mu(t)d\mu(s)=-2f(x)\int_{S}f(s)d\mu(s),$$
which proves (\ref{eq99}). \\Setting $y=s$ in  (\ref{eq09}) and
integrating the result obtained with respect to $s$ we get by using
(\ref{eq99}) that
$$\int_{S}\int_{S}f(x\tau(s)t)d\mu(s)d\mu(t)-\int_{S}\int_{S}f(xst)d\mu(s)d\mu(t)=2f(x)\int_{S}f(s)d\mu(s)$$$$=f(x)\int_{S}f(s)d\mu(s)-\int_{S}\int_{S}f(xst)d\mu(s)d\mu(t).$$
 So, we deduce  formula  (\ref{eq100}).\\
By replacing $x$ by $xs$ in the functional equation (\ref{eq09}) and
integrating the result obtained with respect to $s$ we get by using
(\ref{eq100}) and the support of $\mu$ contained in the center of
$S$ that
$$\int_{S}\int_{S}f(xs\tau(y)t)d\mu(s)d\mu(t)-\int_{S}\int_{S}f(xsyt)d\mu(s)d\mu(t)$$$$=2f(y)\int_{S}f(xs)d\mu(s)$$
$$=-f(x\tau(y))\int_{S}f(s)d\mu(s)+f(xy)\int_{S}f(s)d\mu(s).$$ If
$\int_{S}f(s)d\mu(s)=0$, then $f(y)\int_{S}f(xs)d\mu(s)=0$ for all
$x,y\in S$. Since $f\neq 0$ then $\int_{S}f(xs)d\mu(s)=0$ for all
$x\in S$, so we have
$$\int_{S}f(x\tau(y)t)d\mu(t)-\int_{S}f(xyt)d\mu(t)=0=2f(x)f(y)$$ for
all $x,y\in S$ from which we deduce that $f(x)=0$ for all $x\in S$.
This contradicts the assumption that $f\neq 0$ and it follows that
$\int_{S}f(s)d\mu(s)\neq 0$, so, we have (\ref{eq88}).\\
From (\ref{eq100}) and (\ref{eq77}), we have
$$\int_{S}\int_{S}\int_{S}\int_{S}f(\tau(st)s't')d\mu(t)d\mu(s)d\mu(t')d\mu(s')$$
$$=-\int_{S}\int_{S}f(\tau(st))d\mu(s)d\mu(t)\int_{S}f(s)d\mu(s)=\int_{S}\int_{S}f(st)d\mu(s)d\mu(t)\int_{S}f(s)d\mu(s).$$
By setting $x=\tau(t)s'$, $y=s$ in (\ref{eq09}) and integrating the
result obtained with respect to $t$ and $s'$ we get by a computation
that
$$\int_{S}\int_{S}\int_{S}\int_{S}f(\tau(t)s'\tau(s)t')d\mu(s)d\mu(t)d\mu(s')d\mu(t')$$$$-\int_{S}\int_{S}\int_{S}\int_{S}f(\tau(t)s'st')d\mu(s)d\mu(t)d\mu(s')d\mu(t')$$
$$=2\int_{S}\int_{S}f(\tau(t)s')d\mu(t)d\mu(s')\int_{S}f(s)d\mu(s),$$ which can be
written as follows
$$\int_{S}\int_{S}f(st)d\mu(s)d\mu(t)\int_{S}f(s)d\mu(s)$$$$+\int_{S}\int_{S}f(\tau(t)s)d\mu(s)d\mu(t)\int_{S}f(s)d\mu(s)$$
$$=2\int_{S}\int_{S}f(\tau(t)s')d\mu(t)d\mu(s')\int_{S}f(s)d\mu(s).$$ This implies that
$$\int_{S}\int_{S}f(st)d\mu(s)d\mu(t)=\int_{S}\int_{S}f(\tau(t)s)d\mu(s)d\mu(t).$$
On the other hand, since $f(\sigma(x))=-f(x)$ for all $x\in S$, we
get
 $$\int_{S}\int_{S}f(\tau(t)s)d\mu(t)d\mu(s)=-\int_{S}\int_{S}f(\tau(t)s)d\mu(t)d\mu(s),$$
 and then we obtain
 $$\int_{S}\int_{S}f(\tau(t)s)d\mu(t)d\mu(s)=0=\int_{S}\int_{S}f(ts)d\mu(t)d\mu(s),$$
 which proves (\ref{eq99'}).\\
 By replacing $x$ by $st$ in (\ref{eq09})  and
integrating the result obtained with respect to $s$ and $t$ we get
$$\int_{S}\int_{S}\int_{S}f(st\tau(y)t')d\mu(s)d\mu(t)d\mu(t')$$$$-\int_{S}\int_{S}\int_{S}f(styt')d\mu(s)d\mu(t)d\mu(t')$$
$$=2f(y)\int_{S}\int_{S}f(st)d\mu(t)d\mu(s)=0.$$
From (2.5) we have
$$\int_{S}\int_{S}\int_{S}f(st\tau(y)t')d\mu(s)d\mu(t)d\mu(t')$$$$=\int_{S}[\int_{S}\int_{S}f(\tau(y)t'ts)d\mu(s)d\mu(t)]d\mu(t')$$
$$=-\int_{S}f(s)d\mu(s)\int_{S}\int_{S}f(\tau(y)t')d\mu(t')$$ and
$$\int_{S}\int_{S}\int_{S}f(styt')d\mu(s)d\mu(t)d\mu(t')d\mu(s)$$
$$=\int_{S}[\int_{S}\int_{S}f(yt'st)d\mu(s)d\mu(t)]d\mu(t')$$$$=-\int_{S}f(s)d\mu(s)\int_{S}f(yt')d\mu(t').$$
Since $\int_{S}f(s)d\mu(s)\neq0$ we deduce that
$\int_{S}f(yt')d\mu(t')=\int_{S}f(\tau(y)t')d\mu(t')$ for all $y\in
S.$ This completes  the proof.
\end{proof}
\begin{lem}Let $f$ be a non-zero solution of equation (\ref{eq09}). Then\\
(1) the function defined by
$$g(x)\;:=\frac{\int_{S}f(xt)d\mu(t)}{\int_{S}f(s)d\mu(s)}\;for \;x\in S$$ is a non-zero
abelian solution of d'Alembert's functional equation
(\ref{eq6}).\\(2) $$\int_{S}\int_{S}g(ts)d\mu(t)d\mu(s)\neq
0;\;\;\int_{S}g(s)d\mu(s)=0$$(3) The function $g$ from (1) has the
form $g=\frac{\chi+\chi\circ\tau}{2}$, where $\chi$ :
$S\longrightarrow \mathbb{C}$, $\chi\neq 0$, is a multiplicative
function on $S.$
\end{lem}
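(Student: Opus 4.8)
The plan is to work throughout with the constant $c:=\int_{S}f(s)d\mu(s)$, which is nonzero by \eqref{eq88}, and with the auxiliary function $F(x):=\int_{S}f(xt)d\mu(t)=c\,g(x)$. Non-triviality of $g$ comes for free: were $g\equiv 0$, both integrals in \eqref{eq09} would vanish and we would get $2f(x)f(y)=0$, contradicting $f\neq 0$. The heart of the argument is part (1), and to reach d'Alembert's equation I would first manufacture two ``mixed'' identities relating $f$ and $g$. Replacing $x$ by $xs$ in \eqref{eq09} and integrating in $s$, then using that the support of $\mu$ lies in the center (so each $s$, and hence each $\tau(s)$, commutes with everything) together with \eqref{eq100}, yields $f(xy)-f(x\tau(y))=2f(y)g(x)$. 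Replacing instead $y$ by $ys$, integrating in $s$, and invoking \eqref{eq99} and \eqref{eq100}, yields $f(xy)+f(x\tau(y))=2f(x)g(y)$.

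Adding these two relations gives the sine addition formula $f(xy)=f(x)g(y)+f(y)g(x)$, which is the key to everything that follows. Next I would record the auxiliary integral $\int_{S}g(yt)d\mu(t)=-f(y)$, an immediate consequence of \eqref{eq100}, and likewise $\int_{S}g(\tau(y)t)d\mu(t)=f(y)$ via \eqref{eq77}. Substituting the sine addition formula into $F(xy)=\int_{S}f(x\cdot yt)d\mu(t)$ and into $F(x\tau(y))=\int_{S}f(x\cdot\tau(y)t)d\mu(t)$, and evaluating the resulting integrals with these two formulas and \eqref{eq111}, produces $g(xy)=g(x)g(y)-c^{-1}f(x)f(y)$ and $g(x\tau(y))=g(x)g(y)+c^{-1}f(x)f(y)$. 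Adding these gives d'Alembert's equation \eqref{eq6}, while the first expression, being symmetric in $x$ and $y$, shows $g(xy)=g(yx)$, i.e. that $g$ is abelian. This completes (1).

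Part (2) is then short. From \eqref{eq99'} we get $\int_{S}g(s)d\mu(s)=c^{-1}\int_{S}\int_{S}f(st)d\mu(t)d\mu(s)=0$. Integrating the auxiliary identity $\int_{S}g(yt)d\mu(t)=-f(y)$ once more against $\mu$ gives $\int_{S}\int_{S}g(ts)d\mu(t)d\mu(s)=-\int_{S}f(y)d\mu(y)=-c\neq 0$, which is the second assertion. For part (3) I would simply feed the non-zero abelian solution $g$ of \eqref{eq6} obtained in (1) into the known classification of abelian d'Alembert solutions on semigroups (Davison \cite{davison}, Stetk\ae r \cite{stkan}), which gives $g=\frac{\chi+\chi\circ\tau}{2}$ for a multiplicative $\chi\neq 0$.

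The main obstacle is entirely inside part (1): discovering that the correct way to extract d'Alembert is first to pass through the sine addition formula rather than to attack $g(xy)+g(x\tau(y))$ directly, since the original equation only supplies the \emph{difference} of the two relevant integrals, not their sum. The supporting technical point is the careful bookkeeping with the involution on the central support of $\mu$ --- in particular verifying that $\tau(z)$ is central whenever $z$ is, so that the identities \eqref{eq99} and \eqref{eq100} of Lemma 2.1 remain applicable after the substitutions $x\mapsto xs$ and $y\mapsto ys$.
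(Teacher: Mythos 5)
Your computations up through d'Alembert's equation are correct, and they follow a genuinely different route from the paper: you first derive the sine addition formula $f(xy)=f(x)g(y)+f(y)g(x)$ from the two substitutions $x\mapsto xs$ and $y\mapsto ys$, and then obtain \eqref{eq6} by adding the product identities $g(xy)=g(x)g(y)-c^{-1}f(x)f(y)$ and $g(x\tau(y))=g(x)g(y)+c^{-1}f(x)f(y)$. The paper instead proves \eqref{eq6} directly by a triple-integral computation using \eqref{eq99}, \eqref{eq100} and the centrality of the support of $\mu$, with no detour through the sine addition law. Your part (2) agrees with the paper's (both rest on \eqref{eq99'} and \eqref{eq100}), and your technical remark that $\tau(z)$ is central whenever $z$ is, is indeed needed and correct.

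The gap is in your claim that the symmetry of $g(xy)=g(x)g(y)-c^{-1}f(x)f(y)$ in $x$ and $y$, i.e. $g(xy)=g(yx)$, shows that $g$ is \emph{abelian}. In the terminology of Stetk\ae r's book [9] --- which is the sense required here, since part (3) is obtained by feeding $g$ into [9, Theorem 9.12], the classification of abelian d'Alembert functions --- abelian means invariance of $g(x_1x_2\cdots x_n)$ under all permutations of the factors, which is strictly stronger than centrality. In fact centrality is automatic for \emph{every} solution of \eqref{eq6}: swapping $x$ and $y$ in \eqref{eq6} and using $g\circ\tau=g$ gives $g(xy)=g(yx)$; yet non-abelian d'Alembert functions exist, e.g. $g=\frac{1}{2}\mathrm{tr}$ on $SL(2,\mathbb{C})$ with $\tau(x)=x^{-1}$. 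So your observation adds nothing beyond what holds for all d'Alembert functions and cannot single out the abelian ones; as written, parts (1) and (3) are both unproved at this point. The paper closes this step by invoking [9, Proposition 9.17] together with the dichotomy of [9, Proposition 8.14(a)], ruling out the non-abelian case by means of part (2) of the lemma ($\int_{S}\int_{S}g(ts)d\mu(t)d\mu(s)\neq 0$ while $\int_{S}g(t)d\mu(t)=0$). Your own framework contains a cleaner fix: you already have the sine addition law with $f\neq 0$, and the sine addition theorem [9, Theorem 4.1] states precisely that its solutions $f$ and $g$ are abelian, and moreover that $g=\frac{\chi_1+\chi_2}{2}$ with $\chi_1,\chi_2$ multiplicative, which carries you most of the way to part (3) as well. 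Replace the symmetry remark by that citation and your proof is complete.
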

\begin{proof} (1). From (\ref{eq99}), (\ref{eq100}) and the definition of $g$ we
get by a computation that
$$(\int_{S}f(s)d\mu(s))^{2}[g(xy)+g(x\tau(y))]=$$$$\int_{S}f(s)d\mu(s)\int_{S}f(xyt)d\mu(t)+\int_{S}f(s)d\mu(s)\int_{S}f(x\tau(y)t)d\mu(t)$$
$$=-\int_{S}\int_{S}\int_{S}f(xytss')d\mu(t)d\mu(s)d\mu(s')$$
$$+\int_{S}\int_{S}\int_{S}f(x\tau(y)t\tau(s)s')d\mu(t)d\mu(s)d\mu(s')$$
$$=\int_{S}\int_{S}\int_{S}f(xs'\tau(ys)t)d\mu(t)d\mu(s)d\mu(s')-\int_{S}\int_{S}\int_{S}f(xs'yst)d\mu(t)d\mu(s)d\mu(s')$$
$$=2\int_{S}f(xs')d\mu(s')\int_{S}f(ys)d\mu(s)$$
 which implies the desired result.\\
 (2). From (\ref{eq100}) and the definition of $g$ we get  $$\int_{S}\int_{S}g(ts)d\mu(t)d\mu(s)=\frac{\int_{S}\int_{S}\int_{S}f(s'ts)d\mu(t)d\mu(s)d\mu(s')}{\int_{S}f(s)d\mu(s)}$$
 $$=\frac{-\int_{S}f(s')d\mu(s')\int_{S}f(s)d\mu(s)}{\int_{S}f(s)d\mu(s)}=-\int_{S}f(s)d\mu(s)\neq 0.$$
 From (\ref{eq99'}) and the definition of $g$ we get $$\int_{S}g(s)d\mu(s)=\frac{\int_{S}f(st)d\mu(s)d\mu(t)
 }{\int_{S}f(s)d\mu(s)}=\frac{0
 }{\int_{S}f(s)d\mu(s)}=0$$
Furthermore, $\int_{S}\int_{S}g(st)d\mu(t)d\mu(s)\neq 0$, so $g\neq
0.$\\As $g$ is a solution of equation (\ref{eq6}) then by [9,
Proposition 9.17(c)] $g$ is a solution of pre-d'Alembert function.
Now, according to [9, Proposition 8.14(a)] we discuss two cases:
\\\textbf{Case 1.} If there is a $t$ in the center of $S$ such that
$g(t)^{2}\neq d(t)$, then $g$ is  abelian.
\\\textbf{Case 2} If for all $t$ in the center of $S$ satisfies
$g(t)^{2}= d(t)$, then we get $g(xt)=g(x)g(t)$ for all $x\in S$ and
$t$ in the center of $S$.  By integrating the expression with
respect to $t$ we get
$\int_{S}g(xt)d\mu(t)=g(x)\int_{S}g(t)d\mu(t)=0$ for all $x\in S$
and then $\int_{S}\int_{S}g(st)d\mu(t)d\mu(s)=0$, This contradicts
the first assertion of Lemma 2.2 (2). Finally, we conclude that $g$
is abelian and for the rest of the proof we use [9, Theorem 9.12].
\end{proof}
The main content of this section is the following theorem.
\begin{thm} The non-zero solutions $f$ : $S\longrightarrow \mathbb{C}$ of
the functional equation (\ref{eq09}) are the functions of the form
\begin{equation}\label{eq300}
    f=[\frac{\chi- \chi\circ\tau}{2}]\int_{S}\chi(\tau(t))d\mu(t),
\end{equation}where $\chi$ : $S\longrightarrow \mathbb{C}$ is a
multiplicative function such that $\int_{S}\chi(t)d\mu(t)\neq 0$ and
$\int_{S}\chi(\tau(t)d\mu(t))=-\int_{S}\chi(t)d\mu(t)$. \\If $S$ is
a topological semigroup and that $\tau$ : $S\longrightarrow S$,  is
continuous, then the non-zero solution $f$ of equation (\ref{eq09})
is continuous, if and only if $\chi$ is continuous.
\end{thm}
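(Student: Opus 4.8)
The plan is to establish formula (\ref{eq300}) in both directions and then settle the continuity clause, using Lemma 2.1 and Lemma 2.2 throughout. I set $c:=\int_{S}f(s)\,d\mu(s)$, which is nonzero by (\ref{eq88}), and recall from Lemma 2.2 that $g(x)=\frac{1}{c}\int_{S}f(xt)\,d\mu(t)$ is a nonzero abelian solution of (\ref{eq6}) of the form $g=\frac{\chi+\chi\circ\tau}{2}$ for some nonzero multiplicative $\chi$. Introducing the companion function $h:=\frac{\chi-\chi\circ\tau}{2}$, the heart of the argument is to show that every solution $f$ is a scalar multiple of $h$ and to identify the scalar; the reverse inclusion is then a direct verification.

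For the forward direction I would first rewrite (\ref{eq09}). Since $\int_{S}f(zt)\,d\mu(t)=c\,g(z)$ for every $z\in S$, the equation becomes
$$c\,[\,g(x\tau(y))-g(xy)\,]=2f(x)f(y),\qquad x,y\in S.$$
Expanding $g=\frac{\chi+\chi\circ\tau}{2}$ and using multiplicativity of $\chi$ together with $\tau(xy)=\tau(y)\tau(x)$ and $\tau\circ\tau=\mathrm{id}$, a short computation gives $g(x\tau(y))-g(xy)=-2h(x)h(y)$, so that $f(x)f(y)=-c\,h(x)h(y)$ for all $x,y\in S$. Fixing $x_{0}$ with $f(x_{0})\neq 0$ then forces $h(x_{0})\neq 0$ and yields $f=k\,h$ with $k:=-c\,h(x_{0})/f(x_{0})\neq 0$; substituting back into $f(x)f(y)=-c\,h(x)h(y)$ produces $k^{2}=-c$.

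It remains to identify $k$ and to extract the two conditions on $\chi$. Writing $a:=\int_{S}\chi(t)\,d\mu(t)$ and $a':=\int_{S}\chi(\tau(t))\,d\mu(t)$, the definition of $c$ gives $c=k\,\frac{a-a'}{2}$, which combined with $k^{2}=-c$ yields $c=-\frac{(a-a')^{2}}{4}$; as $c\neq 0$ this forces $a\neq a'$. On the other hand, inserting $f=k\,h$ into (\ref{eq99'}) and expanding $h(ts)$ by multiplicativity gives $\frac{1}{2}(a^{2}-a'^{2})=0$, hence $a=-a'$. Therefore $k=\frac{a'-a}{2}=a'$ and $a=-a'\neq 0$, so $f=a'\,h=[\frac{\chi-\chi\circ\tau}{2}]\int_{S}\chi(\tau(t))\,d\mu(t)$, which is (\ref{eq300}), with $\int_{S}\chi(t)\,d\mu(t)\neq 0$ and $\int_{S}\chi(\tau(t))\,d\mu(t)=-\int_{S}\chi(t)\,d\mu(t)$. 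For the converse I would substitute a function of the form (\ref{eq300}) into (\ref{eq09}): using multiplicativity and the two conditions on $\chi$, the left-hand side collapses to $2f(x)f(y)$, and such an $f$ is nonzero because $a'\neq 0$ while the relation $a'=-a\neq 0$ rules out $\chi=\chi\circ\tau$.

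Finally, for the continuity clause, if $\chi$ is continuous then so is $\chi\circ\tau$ (as $\tau$ is continuous), whence $f$ is continuous by (\ref{eq300}). Conversely, if $f$ is continuous then $h=f/a'$ is continuous, and $g=\frac{1}{c}\int_{S}f(\cdot\,t)\,d\mu(t)$ is continuous because $\mu$ is a discrete measure supported on central elements and the multiplication of $S$ is continuous; since $\chi=g+h$, continuity of $\chi$ follows. The step requiring the most care—and the \emph{main obstacle}—is the constant bookkeeping in the third paragraph: one must combine $k^{2}=-c$, the definition of $c$, and (\ref{eq99'}) to simultaneously pin down the scalar as $\int_{S}\chi(\tau(t))\,d\mu(t)$ and to force both the sign relation $a'=-a$ and the nonvanishing $a\neq 0$. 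This is the only place where the argument could genuinely fail, the remaining manipulations being routine consequences of multiplicativity of $\chi$ and the centrality of the support of $\mu$.
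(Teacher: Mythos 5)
Your proposal is correct, and it rests on the same infrastructure as the paper's proof---Lemma 2.1, Lemma 2.2, and the representation $g=\frac{\chi+\chi\circ\tau}{2}$---but the endgame runs along a genuinely different route. Writing $c=\int_{S}f(s)d\mu(s)$, $a=\int_{S}\chi(t)d\mu(t)$, $a'=\int_{S}\chi(\tau(t))d\mu(t)$: the paper substitutes $y=s$ into (\ref{eq09}), integrates, and uses (\ref{eq99}) and (\ref{eq100}) to obtain in one stroke the representation $f=\left[\frac{a-a'}{2}\right]\left[\frac{\chi\circ\tau-\chi}{2}\right]$, so the functional form and its scalar appear simultaneously; the sign condition $a'=-a$ is then forced by substituting this representation into (\ref{eq111}). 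You instead rewrite the whole equation as $c\,[g(x\tau(y))-g(xy)]=2f(x)f(y)$, factor the left-hand side as $-2c\,h(x)h(y)$ with $h=\frac{\chi-\chi\circ\tau}{2}$, and get the separated identity $f(x)f(y)=-c\,h(x)h(y)$; this yields $f=kh$ with an a priori unknown scalar obeying $k^{2}=-c$, which you then pin down from $c=k\,\frac{a-a'}{2}$ together with (\ref{eq99'}) in place of (\ref{eq111}). I checked your key computations ($g(x\tau(y))-g(xy)=-2h(x)h(y)$, then $k=\frac{a'-a}{2}$, $a^{2}=a'^{2}$, hence $a=-a'$ and $k=a'$); they are right. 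As for what each approach buys: the paper reaches the explicit scalar faster, while your version avoids (\ref{eq111}) altogether (an identity whose own derivation inside Lemma 2.1 already passes through (\ref{eq99'})), and---unlike the paper, which closes with ``the rest of the proof is similar to Stetk\ae r's proof''---you actually carry out the two remaining steps: the converse verification that every function of the form (\ref{eq300}) is a nonzero solution of (\ref{eq09}), and the continuity equivalence via $\chi=g+h$. One caveat on that last point: continuity of $x\mapsto\int_{S}f(xt)d\mu(t)$ from continuity of $f$ uses that $\mu$ is a (finite) linear combination of point masses at central elements and that the translations $x\mapsto xz_{i}$ are continuous; this is exactly the paper's standing hypothesis on $\mu$ and on the topological semigroup $S$, so your argument goes through.
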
\begin{proof}Let $f$: $S\longrightarrow \mathbb{C}$, $f\neq 0$ , be a solution of equation (\ref{eq09}). Then by replacing $y$ by $s$ and integrating the result
obtained with respect to $s$
 we get
$$f(x)=\frac{\int_{S}\int_{S}f(x\tau(s)t)d\mu(s)d\mu(t)-\int_{S}\int_{S}f(xst)d\mu(s)d\mu(t)}{2\int_{S}f(s)d\mu(s)}$$
$$=\frac{\int_{S}g(x\tau(s))d\mu(s)-\int_{S}g(xs)d\mu(s)}{2}$$ for all $x\in S$
and where $g$ is the function given by Lemma 2.2(1). According to
$g=\frac{\chi+ \chi\circ\tau}{2}$ we get the following formula
\begin{equation}\label{eq301}
    f=[\frac{\int_{S}\chi(s)d\mu(s)-\int_{S}\chi(\tau(s))d\mu(s)}{2}][\frac{\chi\circ\tau-\chi}{2}].
\end{equation} In view of Lemma 2.1 we have
$\int_{S}f(\tau(x)t)d\mu(t)=\int_{S}f(xt)d\mu(t)$ for all $x\in S$.
Substituting (\ref{eq301})  into (\ref{eq111}) we find after simple
computations  that
$$[\int_{S}\chi(\tau(s))d\mu(s)+\int_{S}\chi(s)d\mu(s)][\chi-\chi\circ\tau]=0.$$ The rest of
the proof is similar to Stetk\ae r's proof \cite{St3}.\end{proof}
\begin{cor} \cite{St3} Let $S$ be a semigroup with an involution
$\tau$: $S\longrightarrow S$. If $\mu=\delta_{z_0}$, where $z_0$ is
a fixed element in the center of $S$. The non-zero solutions $f$ :
$S\longrightarrow \mathbb{C}$ of the functional equation (\ref{eq8})
are the functions of the form
\begin{equation}\label{eq300}
    f=\chi(\tau(z_0))[\frac{\chi- \chi\circ\tau}{2}],
\end{equation}where $\chi$ : $S\longrightarrow \mathbb{C}$ is a
multiplicative function such that $\chi(z_0)\neq 0$ and
$\chi(\tau(z_0)=-\chi(z_0)$.\end{cor}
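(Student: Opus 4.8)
The plan is to obtain this corollary as a direct specialization of Theorem 2.3 to the Dirac measure $\mu = \delta_{z_0}$. First I would verify that this choice falls under the standing hypotheses of the section: $\delta_{z_0}$ is (trivially) a linear combination of point measures, and since $z_0$ is assumed to lie in the center of $S$, the support $\{z_0\}$ of $\mu$ is contained in the center. Hence Theorem 2.3, together with the lemmas preceding it, applies without modification.

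Next I would unwind every integral occurring in Theorem 2.3 using the evaluation property of the Dirac measure, $\int_S h(t)\,d\delta_{z_0}(t) = h(z_0)$. Applied to (\ref{eq09}), the left-hand side $\int_S f(x\tau(y)t)\,d\mu(t) - \int_S f(xyt)\,d\mu(t)$ collapses to $f(x\tau(y)z_0) - f(xyz_0)$, so that (\ref{eq09}) becomes exactly (\ref{eq8}). Consequently the non-zero solutions of (\ref{eq8}) coincide with the non-zero solutions of (\ref{eq09}) for $\mu = \delta_{z_0}$, and these are described by Theorem 2.3.

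Finally I would substitute $\mu = \delta_{z_0}$ into the conclusion of Theorem 2.3. The scalar factor $\int_S \chi(\tau(t))\,d\mu(t)$ reduces to $\chi(\tau(z_0))$, so the solution formula $f = \left[\frac{\chi - \chi\circ\tau}{2}\right]\int_S \chi(\tau(t))\,d\mu(t)$ becomes $f = \chi(\tau(z_0))\left[\frac{\chi - \chi\circ\tau}{2}\right]$; likewise the side conditions $\int_S \chi(t)\,d\mu(t) \neq 0$ and $\int_S \chi(\tau(t))\,d\mu(t) = -\int_S \chi(t)\,d\mu(t)$ become $\chi(z_0) \neq 0$ and $\chi(\tau(z_0)) = -\chi(z_0)$ respectively. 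This is precisely the claimed description, thereby recovering Stetk\ae r's result from \cite{St3}.

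Since the corollary is a pure specialization, I do not anticipate a genuine obstacle. The only point requiring care is the routine bookkeeping that confirms the support condition is satisfied and that each integral correctly collapses to a point evaluation at $z_0$; once that is verified, the statement follows immediately from Theorem 2.3.
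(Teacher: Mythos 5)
Your proposal is correct and is exactly the route the paper intends: the paper states this corollary immediately after Theorem 2.3 with no separate proof, treating it as the direct specialization $\mu=\delta_{z_0}$, under which equation (\ref{eq09}) collapses to (\ref{eq8}) and the integral conditions collapse to $\chi(z_0)\neq 0$ and $\chi(\tau(z_0))=-\chi(z_0)$. Your verification of the support hypothesis and the point-evaluation bookkeeping is the only content needed, so the proposal matches the paper's approach.
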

\section{Integral Kannappan's functional equation on semigroups}
In this section we study the complex-valued solutions of the
functional equation (\ref{eq010}). The support of the discrete
complex measure $\mu$ is assumed to be contained in the center of
the semigroup $S$. \\The following useful lemma will be used later.
It's a natural generalization of Lemma 1 and Lemma 2 obtained by
Stetk\ae r \cite{stkan} for $\mu=\delta_{z_0}$.
\begin{lem} \textbf{(1)} If $f$ $S\longrightarrow \mathbb{C}$ is a solution of (\ref{eq010}), then  for
all $x\in S$ we have
\begin{equation}\label{eqo77}
    f(x)=f(\tau(x)),
\end{equation}
\begin{equation}\label{eqo88}
    \int_{S}f(t)d\mu(t)\neq 0\Longleftrightarrow f\neq 0.
\end{equation}
\begin{equation}\label{eqo999}
    \int_{S}\int_{S}f(x\sigma(t)s)d\mu(t)d\mu(s)=f(x)\int_{S}f(t)d\mu(t),
\end{equation}
\begin{equation}\label{eqo1000}
    \int_{S}\int_{S}f(xts)d\mu(t)d\mu(s)=f(x)\int_{S}f(t)d\mu(t),
\end{equation}
 \end{lem}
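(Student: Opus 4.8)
The plan is to follow the template of Lemma 2.1 (and of Stetk\ae r's lemmas for $\mu=\delta_{z_0}$), changing every sign to account for the plus in (\ref{eq010}). Write $\kappa:=\int_S f\,d\mu$, and record first that if $a$ lies in the support of $\mu$ (so $a$ is central) then $\tau(a)$ is central too: apply $\tau$ to $ay=ya$ and use that $\tau$ is bijective. This is what will allow the integration points $t,s$ and their $\tau$-images to be moved freely past the non-central factors in the arguments of $f$. To obtain (\ref{eqo77}) I replace $y$ by $\tau(y)$ in (\ref{eq010}); since $\tau$ is an involution the left-hand side is unchanged while the right-hand side becomes $2f(x)f(\tau(y))$, so $f(x)f(\tau(y))=f(x)f(y)$ for all $x,y$, giving $f=f\circ\tau$ whenever $f\neq0$ (and trivially when $f=0$). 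Note that here $f$ is $\tau$-even, the opposite of the Van Vleck case.

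The core of the lemma is the pair (\ref{eqo999})--(\ref{eqo1000}) (reading $\sigma$ as $\tau$), and I expect the main difficulty to be here, in arranging the repeated use of $f=f\circ\tau$ and of centrality so that the double integrals telescope. Put $H(x):=\int_S\int_S f(x\tau(t)s)\,d\mu(t)\,d\mu(s)$ and $G(x):=\int_S\int_S f(xts)\,d\mu(t)\,d\mu(s)$. The pivotal observation is the symmetry $H(x)=H(\tau(x))$: applying $f=f\circ\tau$ to the integrand, reversing the product under $\tau$, and sliding the central elements $\tau(s),t$ past $\tau(x)$ gives $f(x\tau(t)s)=f(\tau(x)\tau(s)t)$, after which relabelling $s\leftrightarrow t$ yields the claim. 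Now I substitute $x\mapsto\tau(s)$ in (\ref{eq010}), simplify the right-hand side to $2f(s)f(y)$ via (\ref{eqo77}), and integrate in $s$; the same three moves identify the two resulting double integrals as $H(\tau(y))$ and $H(y)$, so the symmetry collapses the left-hand side to $2H(y)$ and delivers $H(y)=f(y)\kappa$, which is (\ref{eqo999}). For (\ref{eqo1000}) I set $y=s$ in (\ref{eq010}) and integrate in $s$ to get $G(x)+H(x)=2f(x)\kappa$, and subtracting (\ref{eqo999}) leaves $G(x)=f(x)\kappa$. Crucially, none of this uses $\kappa\neq0$, so both identities are in hand before (\ref{eqo88}) is proved.

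It remains to prove (\ref{eqo88}). If $f=0$ then $\kappa=0$, so $\kappa\neq0$ forces $f\neq0$. For the reverse implication, assume $f\neq0$ and, for contradiction, $\kappa=0$. Replacing $x$ by $xu$ in (\ref{eq010}) and integrating in $u$---using centrality of $u$ to rewrite $xuy=xyu$---produces $G(xy)+G(x\tau(y))=2f(y)\int_S f(xu)\,d\mu(u)$. With $\kappa=0$ the identity (\ref{eqo1000}) makes $G\equiv0$, so $f(y)\int_S f(xu)\,d\mu(u)=0$ for all $x,y$; choosing $y$ with $f(y)\neq0$ forces $\int_S f(xt)\,d\mu(t)=0$ for every $x$. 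But then the whole left-hand side of (\ref{eq010}) vanishes, giving $f(x)f(y)=0$ for all $x,y$ and hence $f=0$, a contradiction. Therefore $\kappa\neq0$, which closes the equivalence and completes the proof.
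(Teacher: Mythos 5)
Your proof is correct and follows essentially the same route as the paper: the same substitutions ($y\mapsto\tau(y)$ for the evenness (\ref{eqo77}), $x\mapsto\tau(s)$ plus integration for (\ref{eqo999}), and $y=s$ plus integration for (\ref{eqo1000})), with $f=f\circ\tau$ and centrality of the support of $\mu$ (and of its $\tau$-image) used to rearrange the double integrals exactly as the paper does implicitly. For (\ref{eqo88}) you substitute only $x\mapsto xu$ and invoke (\ref{eqo1000}), while the paper substitutes both $x\mapsto xs$ and $y\mapsto yt$ and uses (\ref{eqo999}) and (\ref{eqo1000}) together, but the mechanism is identical --- force $\int_S f(xt)\,d\mu(t)=0$ for all $x$, so the left-hand side of (\ref{eq010}) vanishes and $f=0$, a contradiction.
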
\textbf{(2)} If $g$ $S\longrightarrow \mathbb{C}$ is a solution of d'Alembert's functional equation  (\ref{eq6}), then\\
 (i) $g(x)=g(\tau(x))$ for all $x\in S.$\\
 (ii) The following properties are equivalent
\begin{equation}\label{pr1}
 \int_{S}g(xt)d\mu(t)=\int_{S}g(x\tau(t))d\mu(t)\; \text{for all}\;x\in S\end{equation}
 \begin{equation}\label{pr2}
  \int_{S}g(xt)d\mu(t)=g(x)\int_{S}g(t)d\mu(t)\; \text{for all}\;x\in S\end{equation}
\begin{equation}\label{pr3}
\int_{S}\int_{S}g(ts)d\mu(t)d\mu(s)=(\int_{S}g(t)d\mu(t))^{2}\end{equation}
\begin{proof}  (1). The formula (\ref{eqo77}) is proved like the corresponding statement in Lemma 2.1.\\
 By putting $x=\tau(s)$ in (\ref{eq010}) and integrating the result obtained with respect  to $s$ to get
 $$\int_{S}\int_{S}f(\tau(s)yt)d\mu(t)d\mu(s)$$$$+\int_{S}\int_{S}f(\tau(s)\tau(y)t)d\mu(t)d\mu(s) =2f(y)\int_{S} f(\tau(t))d\mu(t)=2f(y)\int_{S}
 f(t)d\mu(t),$$ where the last equality holds, because $f$ satisfies (\ref{eqo77}).
 \\In view of (\ref{eqo77}), we have
 $$\int_{S}\int_{S}f(\tau(s)\tau(y)t)d\mu(t)=\int_{S}\int_{S}f(\tau(t)ys)d\mu(t)d\mu(s).$$
 So, we obtain $2\int_{S}\int_{S}f(\tau(t)ys)d\mu(t)d\mu(s)=2f(y)\int_{S}
 f(t)d\mu(t)$, which proves (\ref{eqo999}).\\By setting $y=s$ in
 (\ref{eq010}) and integrating the result obtained with respect to $s$ we get
 $$\int_{S}\int_{S}f(xst)d\mu(t)d\mu(s)$$
 $$+\int_{S}\int_{S}f(x\tau(s)t)d\mu(t)d\mu(s) =2f(x)\int_{S} f(s)d\mu(s)$$
 $$=\int_{S}\int_{S}f(xst)d\mu(t)d\mu(s)$$
 $$+f(x)\int_{S} f(s)d\mu(s),$$ which implies the formula
 (\ref{eqo1000}).\\Assume that $f$ is a solution of equation
 (\ref{eq010}) and that $\int_{S}f(t)d\mu(t)=0$. Replacing $x$ by
 $xs$, $y$ by $yt$ in  (\ref{eq010}) and integrating the result obtaind with respect
 to $s$ and $t$ we find
$$\int_{S}\int_{S}\int_{S}f(xsytk)d\mu(t)d\mu(s)d\mu(k)+\int_{S}\int_{S}\int_{S}f(xs\tau(t)\tau(y)k)d\mu(t)d\mu(s)d\mu(k)$$
$$=2\int_{S}f(xs)d\mu(s)\int_{S}f(yt)d\mu(s).$$
Since, from (3.3) we have
$$\int_{S}\int_{S}\int_{S}f(xs\tau(t)\tau(y)k)d\mu(t)d\mu(s)d\mu(k)=\int_{S}[\int_{S}\int_{S}f(xs\tau(y)\tau(t)k)d\mu(t)d\mu(k)]d\mu(s)$$
$$=\int_{S}[\int_{S}f(t)d\mu(t)f(xs\tau(y))]d\mu(s)=\int_{S}0d\mu(s)=0.$$
In view of (3.4) we have
$$\int_{S}\int_{S}\int_{S}f(xsytk)d\mu(t)d\mu(s)d\mu(k)=\int_{S}[\int_{S}\int_{S}f(xystk)d\mu(t)d\mu(k)]d\mu(s)$$
$$=\int_{S}[\int_{S}f(t)d\mu(t)f(xys)]d\mu(s)=\int_{S}0d\mu(s)=0,$$ and it
follows that $\int_{S}f(xs)d\mu(s)\int_{S}f(yt)d\mu(s)=0$ for all
$x,y\in S.$ So, we obtain
$$\int_{S}f(xyt)d\mu(t)+\int_{S}f(x\tau(y)t)d\mu(t)=2f(x)f(y)=0$$
for all $x,y\in S$. Consequently, $f(x)=0$ for all $x\in S$ and this proves (3.2).\\
(2) Let $g$ be a solution of (\ref{eq6}). Assume that
$\int_{S}g(xt)d\mu(t)=\int_{S}g(x\tau(t))d\mu(t)$ holds for all
$x\in S$. Since $g(xt)+g(x\tau(t))=2g(x)g(t)$ for all $x,t\in S$,
then by integrating the statement with respect to $t$, we get
$$\int_{S}g(xt)d\mu(t)+\int_{S}g(x\tau(t))d\mu(t)=2g(x)\int_{S}g(t)d\mu(t)=2\int_{S}g(xt)d\mu(t).$$
Conversely,
$$2\int_{S}g(xt)d\mu(t)=2g(x)\int_{S}g(t)d\mu(t)=\int_{S}[g(xt)+g(x\tau(t))]d\mu(t)$$$$=\int_{S}g(xt)d\mu(t)+\int_{S}g(x\tau(t))d\mu(t),$$
which implies that $\int_{S}g(xt)d\mu(t)=\int_{S}g(x\tau(t))d\mu(t)$
for all $x\in S$  and that (\ref{pr1}) and (\ref{pr2}) are
equivalent. \\Now,  we will show that (\ref{pr3}) and (\ref{pr2})
are equivalent. If $\int_{S}g(st)d\mu(t)=g(s)\int_{S}g(t)d\mu(t)$
for all $s\in S$, then by integration this expression with respect
to $s$, we get
$\int_{S}\int_{S}g(st)d\mu(s)d\mu(t)=(\int_{S}g(t)d\mu(t))^{2}$.
Conversely, suppose that
$\int_{S}\int_{S}g(st)d\mu(s)d\mu(t)=(\int_{S}g(t)d\mu(t))^{2}$.
Since $g$ is a solution of d'Alembert's functional equation
(\ref{eq6}), then $g$ is a solution of the pre-d'Alembert functional
equation [9, Proposition 9.17]. So, from [9, Proposition 8.14(a)] we
will discuss the following two cases.\\
\textbf{Case 1.} If for all $s$ in the center of $S$ satisfies
$g(s)^{2}=d(s)$, then $g(xs)=g(x)g(s)$ for all $x\in S$. So, by
integrating this expression with respect to $s$ we get
$\int_{S}g(xs)d\mu(s)=g(x)\int_{S}g(s)d\mu(s)$ for all $x\in S$.\\
\textbf{Case 2.} If there is $s$ in the center of $S$ such that
$g(s)^{2}\neq d(s)$, then $g$ is abelian and there exists a
multiplicative function $\chi$: $S\longrightarrow \mathbb{C}$ such
that $g=\frac{\chi+\chi\circ \tau}{2}$. Substituting this into
$\int_{S}\int_{S}g(st)d\mu(s)d\mu(t)=(\int_{S}g(t)d\mu(t))^{2}$,
gives after an elementary computations that
$$\int_{S}\chi(t)d\mu(t)-\int_{S}\chi(\tau(t))d\mu(t)=0.$$ Thus, we get $$\int_{S}g(xt)d\mu(t)=\int_{S}\frac{\chi+\chi\circ
\tau}{2}(xt)d\mu(t)$$
$$=\frac{1}{2}(\chi(x)\int_{S}\chi(t)d\mu(t)+\chi(\tau(x))\int_{S}\chi(\tau(t))d\mu(t)=\int_{S}\chi(t)d\mu(t)\frac{\chi(x)+\chi\circ \tau(x)}{2}$$
$$=g(x)\int_{S}g(t)d\mu(t).$$ This completes the proof.
\end{proof}Now, we are ready to prove the secand main result of this
paper. We use the following  notations \cite{stkan}  :\\ -
$\mathcal{A}$ consists of the solution of $g:$ $S\longrightarrow
\mathbb{C}$ of d'Alembert's functional equation (\ref{eq6}) with
$\int_{S}g(t)d\mu(t)\neq 0$ and satisfying the conditions of Lemma
3.1(2)(ii).\\- To any $g\in \mathcal{A}$ we associate the function
$Tg=\int_{S}g(t)d\mu(t)g:$ $S\longrightarrow \mathbb{C}$.\\
- $\mathcal{K}$ consists of the non-zero solutions $f:$
$S\longrightarrow \mathbb{C}$ of integral Kannappan's functional
equation (\ref{eq010}).
\begin{thm}{(1)} $T$ is a bijection of
$\mathcal{A}$ onto $\mathcal{K}$. The inverse $T^{-1}$:
$\mathcal{K}\longrightarrow \mathcal{A}$ is defined by
$$(T^{-1}f)(x)=\frac{\int_{S}f(xt)d\mu(t)}{\int_{S}f(t)d\mu(t)}$$ for
all $f\in \mathcal{K}$ and $x\in S. $\\{(2)} Any non-zero solution
$f$: $S\longrightarrow \mathbb{C}$ of integral Kannappan's
functional equation (\ref{eq010}) is of the form
$f=\int_{S}g(t)d\mu(t)g$, where $g\in \mathcal{A}$. Furthermore,
$f(x)=\int_{S}g(xt)d\mu(t)=\int_{S}g(x\tau(t))d\mu(t)=\int_{S}g(t)d\mu(t)g(x)$
for all $x\in S.$\\(3) $f$ is abelian \cite{07}  if and only if $g$
is abelian.\\(4) If $S$ is equipped with a topology then $f$ is
continuous if and only if $g$ is continuous.
\end{thm}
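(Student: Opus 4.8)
The plan is to prove part (1) by exhibiting two mutually inverse maps, and then to read off (2)--(4) as corollaries. For brevity I would write $c(h):=\int_{S}h(t)\,d\mu(t)$. The first, routine, direction is that $T$ sends $\mathcal{A}$ into $\mathcal{K}$: for $g\in\mathcal{A}$ put $f=Tg=c(g)\,g$; applying property (\ref{pr2}) of Lemma 3.1(2)(ii) with $x$ replaced by $xy$ and by $x\tau(y)$, together with the d'Alembert equation (\ref{eq6}), one gets $\int_{S}f(xyt)\,d\mu(t)+\int_{S}f(x\tau(y)t)\,d\mu(t)=c(g)^{2}[g(xy)+g(x\tau(y))]=2f(x)f(y)$, and $f\neq0$ since $c(g)\neq0$; hence $f\in\mathcal{K}$.

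The substance of the theorem lies in the opposite direction. Given $f\in\mathcal{K}$, I would set $g:=T^{-1}f$, that is $g(x)=\int_{S}f(xt)\,d\mu(t)/c(f)$, which is legitimate because $c(f)\neq0$ by (\ref{eqo88}). The key step is to show $f$ and $g$ are proportional, and I would obtain this from two ``mixed'' identities. Replacing $x$ by $xu$ in (\ref{eq010}) and integrating in $u$, then using the centrality of the support of $\mu$ to rewrite $f(xuyt)=f((xy)ut)$ and $f(xu\tau(y)t)=f((x\tau(y))ut)$ and applying (\ref{eqo1000}), yields $f(xy)+f(x\tau(y))=2g(x)f(y)$. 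Symmetrically, replacing $y$ by $uy$ and integrating, using $\tau(uy)=\tau(y)\tau(u)$ with $\tau(u)$ central and applying (\ref{eqo999}), yields $f(xy)+f(x\tau(y))=2f(x)g(y)$. Comparing the two gives $g(x)f(y)=f(x)g(y)$ for all $x,y$; since $g\not\equiv0$ (otherwise (\ref{eq010}) would force $f(x)f(y)\equiv0$) we conclude $f=\lambda g$ for a constant $\lambda\neq0$. Feeding $f=\lambda g$ back into either mixed identity collapses it to the d'Alembert equation for $g$, and moreover $\int_{S}g(xt)\,d\mu(t)=\lambda^{-1}\int_{S}f(xt)\,d\mu(t)=c(f)\lambda^{-1}g(x)=c(g)g(x)$, which is (\ref{pr2}); thus $g\in\mathcal{A}$. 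I expect the main obstacle to be exactly the derivation of these two identities, since every rearrangement of arguments there rests on the support of $\mu$ being central, and it is the comparison of the two that forces proportionality.

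It then remains only to match constants, and this is where the precise normalization $f=(\int_{S}g\,d\mu)g$ is forced. Reading the unprocessed equation (\ref{eq010}) as $c(f)[g(xy)+g(x\tau(y))]=2f(x)f(y)$ and inserting $f=\lambda g$ with $g$ d'Alembert gives $c(f)=\lambda^{2}$, while directly $c(f)=\int_{S}\lambda g\,d\mu=\lambda\,c(g)$; hence $\lambda=c(g)$ and $f=c(g)\,g=Tg$. This shows $T\circ T^{-1}=\mathrm{id}_{\mathcal{K}}$, and the reverse $T^{-1}\circ T=\mathrm{id}_{\mathcal{A}}$ is a one-line computation using (\ref{pr2}) once more, so $T$ is a bijection with the stated inverse, proving (1). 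Part (2) is then a restatement: $f=c(g)\,g$, and the chain $f(x)=\int_{S}g(xt)\,d\mu(t)=\int_{S}g(x\tau(t))\,d\mu(t)$ follows from (\ref{pr2}) and the equivalence (\ref{pr1})$\Leftrightarrow$(\ref{pr2}).

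Finally, parts (3) and (4) are immediate consequences of the clean relation $f=c(g)\,g$ with $c(g)\neq0$ a scalar. Scaling by a nonzero constant preserves abelianity in both directions, giving (3); and since $\mu$ is a finite combination of point masses at central elements, the map $g\mapsto\int_{S}g(\cdot\,t)\,d\mu(t)$ is a finite sum of translates of $g$, so $f=c(g)\,g$ is continuous if and only if $g$ is continuous, giving (4).
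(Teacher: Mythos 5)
Your proof is correct, and while your first half (that $T$ maps $\mathcal{A}$ into $\mathcal{K}$) is the same computation as the paper's, your argument for the converse direction takes a genuinely different route. The paper, given $f\in\mathcal{K}$, shows that $g=T^{-1}f$ satisfies d'Alembert's equation (\ref{eq6}) by a triple-integral computation: it expands $\int_{S}f\,d\mu\cdot\int_{S}f(xyt)\,d\mu(t)$ and its $\tau$-counterpart via (\ref{eqo999}) and (\ref{eqo1000}), commutes central factors, and applies (\ref{eq010}) at translated points; it then verifies the condition of Lemma 3.1(2)(ii) in the form (\ref{pr3}), quotes Lemma 3 of \cite{stkan} for injectivity of $T$, and leaves the equality $T(T^{-1}f)=f$ --- which is what surjectivity onto $\mathcal{K}$ actually requires --- implicit in that citation. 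You instead derive the two mixed identities $f(xy)+f(x\tau(y))=2g(x)f(y)$ and $f(xy)+f(x\tau(y))=2f(x)g(y)$ (both derivations are sound: centrality of the support of $\mu$ together with (\ref{eqo88}), (\ref{eqo999}), (\ref{eqo1000}) is exactly what they use), deduce the proportionality $f=\lambda g$ with $\lambda\neq0$, and then pin down $\lambda=\int_{S}g\,d\mu$; this yields in one stroke that $g$ is a d'Alembert function satisfying (\ref{pr2}) with $\int_{S}g\,d\mu\neq0$, hence $g\in\mathcal{A}$, and that $f=Tg$, while your one-line check of $T^{-1}\circ T=\mathrm{id}_{\mathcal{A}}$ disposes of injectivity without any external lemma. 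What your route buys is a self-contained proof that explicitly closes the step the paper glosses over; what the paper's route buys is a direct verification of (\ref{pr3}) and closer alignment with Stetk\ae r's original $\mu=\delta_{z_0}$ argument. Two cosmetic remarks: in your second mixed identity the centrality of $\tau(u)$ is not actually needed, since the term $\int_{S}\int_{S}f(x\tau(y)\tau(u)t)\,d\mu(u)\,d\mu(t)$ is literally of the form (\ref{eqo999}) (the fact is true anyway, as $\tau$ is a bijective anti-homomorphism of $S$); and when you cancel $g(x)g(y)$ to conclude $\int_{S}f\,d\mu=\lambda^{2}$, you should note that this uses $g\neq0$. Your reduction of parts (2)--(4) to the scalar relation $f=\left(\int_{S}g\,d\mu\right)g$ matches what the paper intends but does not write out.
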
 \begin{proof} If $g\in \mathcal{A}$, then
$$\int_{S}Tg(xyt)d\mu(t)+\int_{S}Tg(x\tau(y)t)d\mu(t)=\int_{S}g(s)d\mu(s)[\int_{S}g(xyt)d\mu(t)+\int_{S}g(x\tau(y)t)d\mu(t)]$$
$$=\int_{S}g(s)d\mu(s)[g(xy)\int_{S}g(t)d\mu(t)+g(x\tau(y))\int_{S}g(t)d\mu(t)]$$
$$=(\int_{S}g(s)d\mu(s))^{2}[2g(x)g(y)]=2Tg(x)Tg(y).$$ Furthermore, $\int_{S}Tg(s)d\mu(s)=(\int_{S}g(s)d\mu(s))^{2}\neq
0$. So, we get  $Tg\in \mathcal{K}$. \\
From [8, Lemma 3] the map $T$ is injective. Now, we will show that
$T$ is surjective. Let $f\in \mathcal{K}$ and define the function
$$g(x)=\frac{\int_{S}f(xt)d\mu(t)}{\int_{S}f(t)d\mu(t)}.$$ In view of (3.3) and (3.4) we have
$$(\int_{S}f(s)d\mu(s))^{2}[g(xy)+g(x\tau(y))]$$$$=\int_{S}f(s)d\mu(s)\int_{S}f(xyt)d\mu(t)+\int_{S}f(s)d\mu(s)\int_{S}f(x\tau(y)t)d\mu(t)$$
$$=\int_{S}\int_{S}\int_{S}f(xytsk)d\mu(t)d\mu(s)d\mu(k)+\int_{S}\int_{S}\int_{S}f(x\tau(y)t\tau(s)k)d\mu(t)d\mu(s)d\mu(k)$$
$$=\int_{S}\int_{S}[\int_{S}f(xtysk)d\mu(k)+\int_{S}f(xt\tau(ys)k)d\mu(k)]d\mu(t)d\mu(s)=2\int_{S}f(xt)d\mu(t)\int_{S}f(ys)d\mu(s)$$
$$=2(\int_{S}f(s)d\mu(s))^{2}g(x)g(y).$$
It follow that $g$ is a
solution of d'Alembert's functional equation (\ref{eq6}).\\
On the other hand, In view of (3.3) and (3.4) we have
$$(\int_{S}g(s)d\mu(s))^{2}=\int_{S}g(s)d\mu(s)\int_{S}g(t)d\mu(t)=\frac{1}{2}\int_{S}\int_{S}[g(st)+g(s\tau(t))]d\mu(s)d\mu(t)$$
$$=\frac{1}{2}\int_{S}\int_{S}[\frac{\int_{S}f(stk)d\mu(k)}{\int_{S}f(s)d\mu(s)}+\frac{\int_{S}f(s\tau(t)k)d\mu(k)}{\int_{S}f(s)d\mu(s)}]d\mu(s)d\mu(t)$$
$$=\frac{1}{2}[\frac{\int_{S}\int_{S}\int_{S}f(stk)d\mu(s)d\mu(t)d\mu(k)}{\int_{S}f(s)d\mu(s)}$$
$$+\frac{\int_{S}\int_{S}\int_{S}f(s\tau(t)k)d\mu(s)d\mu(t)d\mu(k)}{\int_{S}f(s)d\mu(s)}]$$
$$=\frac{1}{2}[\frac{\int_{S}f(s)d\mu(s)\int_{S}f(s)d\mu(s)}{\int_{S}f(s)d\mu(s)}$$
$$+\frac{\int_{S}f(s)d\mu(s)\int_{S}f(s)d\mu(s)}{\int_{S}f(s)d\mu(s)}]=\int_{S}f(s)d\mu(s),$$
and
$$\int_{S}g(st)d\mu(s)d\mu(t)=\frac{\int_{S}\int_{S}\int_{S}f(tsk)d\mu(t)d\mu(s)d\mu(k)}{\int_{S}f(s)d\mu(s)}$$
$$=\frac{\int_{S}f(t)d\mu(t)\int_{S}f(s)d\mu(s)}{\int_{S}f(s)d\mu(s)}=\int_{S}f(s)d\mu(s),$$ which proves that $g$ satisfies the conditions of Lemma 3.1(2)(ii).\\
Finally, $(\int_{S}g(s)d\mu(s))^{2}=\int_{S}f(s)d\mu(s)\neq 0$, then
$\int_{S}g(s)d\mu(s)\neq 0$. This completes the proof.
\end{proof}
\begin{cor}\cite{stkan} If $\mu=\delta_{z_0}$, where $z_0$ is a fixed element in the center of a semi group $S$. Then, any non-zero solution
$f$: $S\longrightarrow \mathbb{C}$ of  Kannappan's functional
equation (\ref{eq2}) is of the form $f=g(z_0)g$, where $g$ is a
 solution of d'Alembert's functional equation (\ref{eq6})
with $g(z_0)\neq 0$ and satisfying the conditions of Lemma 3.1((ii).
\end{cor}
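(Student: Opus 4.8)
The plan is to obtain this corollary as an immediate specialization of the preceding Theorem 3.2 to the Dirac measure $\mu = \delta_{z_0}$. The one observation that drives everything is that integration against $\delta_{z_0}$ is just evaluation at $z_0$: for every function $h \colon S \to \mathbb{C}$ we have $\int_{S} h(t)\,d\mu(t) = h(z_0)$, and similarly $\int_{S}\int_{S} h(ts)\,d\mu(t)\,d\mu(s) = h(z_0^2)$. So the proof amounts to feeding $\mu = \delta_{z_0}$ into the statement of Theorem 3.2 and reading off what each integral becomes.

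First I would check that the integral Kannappan equation (\ref{eq010}) really reduces to the classical Kannappan equation (\ref{eq2}) under this choice of $\mu$. With $\mu = \delta_{z_0}$, equation (\ref{eq010}) reads $f(xyz_0) + f(x\tau(y)z_0) = 2f(x)f(y)$. Since $z_0$ lies in the center of $S$, I may slide it past $y$ and past $\tau(y)$, so that $f(xyz_0) = f(xz_0y)$ and $f(x\tau(y)z_0) = f(xz_0\tau(y))$; the equation becomes exactly (\ref{eq2}). Consequently a non-zero $f$ is a solution of (\ref{eq2}) if and only if it is a solution of (\ref{eq010}) for $\mu = \delta_{z_0}$, i.e. the set $\mathcal{K}$ appearing in Theorem 3.2 is precisely the set of non-zero solutions of (\ref{eq2}).

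Next I would invoke Theorem 3.2(2): every non-zero $f \in \mathcal{K}$ has the form $f = \int_{S} g(t)\,d\mu(t)\,g$ with $g \in \mathcal{A}$. Substituting $\int_{S} g(t)\,d\mu(t) = g(z_0)$ gives $f = g(z_0)\,g$, which is the asserted form. It then remains only to unwind the definition of $\mathcal{A}$: by definition $g$ solves d'Alembert's equation (\ref{eq6}), satisfies $\int_{S} g(t)\,d\mu(t) = g(z_0) \neq 0$, and meets the conditions of Lemma 3.1(2)(ii). This is exactly the description claimed in the corollary, so nothing further is needed.

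There is no genuine obstacle in this argument; it is a routine specialization. The only point deserving a moment's care is confirming that it is the centrality of $z_0$ that licenses rewriting (\ref{eq010}) as (\ref{eq2}), and that the three equivalent conditions of Lemma 3.1(2)(ii) specialize to their pointwise counterparts — for instance (\ref{pr2}) becomes $g(xz_0) = g(x)g(z_0)$ for all $x \in S$ — thereby recovering the hypotheses in Stetk\ae r's original statement. Everything else is a mechanical insertion of $\delta_{z_0}$ into Theorem 3.2.
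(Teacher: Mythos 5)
Your proposal is correct and follows exactly the route the paper intends: the corollary is stated without proof as an immediate specialization of Theorem 3.2 to $\mu=\delta_{z_0}$, and your argument—reducing (\ref{eq010}) to (\ref{eq2}) via the centrality of $z_0$, then reading $\int_S g(t)\,d\mu(t)=g(z_0)$ off Theorem 3.2(2) and the definition of $\mathcal{A}$—is precisely that specialization, with the centrality point correctly identified as the only step needing care.
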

\begin{prop} The non-zero abelian solutions of integral Kannappan's
functional equation (\ref{eq010}) are the functions of the form
$$f(x)=[\frac{\chi(x)+\chi(\tau(x))}{2}]\int_{S}\chi(t)d\mu(t),\; x\in
S,$$ where $\chi:$ $S\longrightarrow \mathbb{C}$ is a multiplicative
function such that\\ $\int_{S}\chi(t)d\mu(t)\neq 0$ and
$\int_{S}\chi(\tau(t))d\mu(t)=\int_{S}\chi(t)d\mu(t)$.
\end{prop}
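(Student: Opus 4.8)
The plan is to deduce this proposition from the structure theorem for all solutions (Theorem 3.2) together with the classification of abelian solutions of d'Alembert's equation (\ref{eq6}). First I would take a non-zero abelian solution $f$ of (\ref{eq010}) and apply Theorem 3.2(2) to write $f=\int_{S}g(t)d\mu(t)\,g$ for some $g\in\mathcal{A}$; by Theorem 3.2(3), since $f$ is abelian so is $g$. Thus $g$ is an abelian, non-zero solution of d'Alembert's equation (\ref{eq6}), and by [9, Theorem 9.12] there is a non-zero multiplicative function $\chi:S\longrightarrow\mathbb{C}$ with $g=\frac{\chi+\chi\circ\tau}{2}$.

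Next I would extract the constraint relating $\chi$, $\tau$ and $\mu$. Because $g\in\mathcal{A}$ it satisfies the conditions of Lemma 3.1(2)(ii); substituting $g=\frac{\chi+\chi\circ\tau}{2}$ into property (\ref{pr3}) exactly as in Case 2 of the proof of Lemma 3.1(2) yields $\int_{S}\chi(\tau(t))d\mu(t)=\int_{S}\chi(t)d\mu(t)$. With this identity, $\int_{S}g(t)d\mu(t)=\frac12\big[\int_{S}\chi(t)d\mu(t)+\int_{S}\chi(\tau(t))d\mu(t)\big]=\int_{S}\chi(t)d\mu(t)$, and feeding this back into $f=\int_{S}g(t)d\mu(t)\,g$ gives $f(x)=\big[\frac{\chi(x)+\chi(\tau(x))}{2}\big]\int_{S}\chi(t)d\mu(t)$. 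The requirement $\int_{S}\chi(t)d\mu(t)\neq0$ then follows because $\int_{S}g(t)d\mu(t)\neq0$ for every $g\in\mathcal{A}$.

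For the converse I would start from a multiplicative $\chi$ satisfying the two stated conditions, set $g=\frac{\chi+\chi\circ\tau}{2}$, and check that $g\in\mathcal{A}$ is abelian. That $g$ solves (\ref{eq6}) and is abelian is immediate from its form. Using multiplicativity, $\int_{S}g(xt)d\mu(t)=\frac12\big[\chi(x)\int_{S}\chi(t)d\mu(t)+\chi(\tau(x))\int_{S}\chi(\tau(t))d\mu(t)\big]$, which by the constraint $\int_{S}\chi(\tau(t))d\mu(t)=\int_{S}\chi(t)d\mu(t)$ equals $g(x)\int_{S}g(t)d\mu(t)$; this is property (\ref{pr2}), so $g$ satisfies Lemma 3.1(2)(ii). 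Moreover $\int_{S}g(t)d\mu(t)=\int_{S}\chi(t)d\mu(t)\neq0$, and $g\neq0$ since $g=0$ would force $\int_{S}\chi(t)d\mu(t)=0$. Hence $g\in\mathcal{A}$, and $f=Tg=\int_{S}g(t)d\mu(t)\,g$ lies in $\mathcal{K}$ by Theorem 3.2(1); it is a non-zero solution of (\ref{eq010}), abelian by Theorem 3.2(3), and of the asserted form.

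The only genuine work is in the second paragraph, namely translating the abstract membership $g\in\mathcal{A}$ into the concrete symmetry $\int_{S}\chi(\tau(t))d\mu(t)=\int_{S}\chi(t)d\mu(t)$. I expect this to be the main obstacle, but it is already carried out inside Case 2 of the proof of Lemma 3.1(2), so in effect the proposition is a clean corollary of Theorem 3.2 and [9, Theorem 9.12]; the remaining substitutions are routine.
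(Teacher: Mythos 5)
Your proposal is correct, and it in fact fills a hole in the paper itself: the paper states this proposition with no proof at all (it is the final result, followed immediately by the bibliography), so there is no argument of the author's to compare yours against. Your route is clearly the intended one, since it uses only the paper's own machinery (Theorem 3.2, Lemma 3.1(2), and [9, Theorem 9.12]). Two points are worth making explicit. First, the key step you single out, extracting $\int_{S}\chi(\tau(t))d\mu(t)=\int_{S}\chi(t)d\mu(t)$ from $g\in\mathcal{A}$, is sound: writing $a=\int_{S}\chi(t)d\mu(t)$ and $b=\int_{S}\chi(\tau(t))d\mu(t)$, multiplicativity of $\chi$ turns (\ref{pr3}) into $\tfrac12(a^{2}+b^{2})=\tfrac14(a+b)^{2}$, i.e.\ $(a-b)^{2}=0$, so $a=b$; this computation uses only the form $g=\frac{\chi+\chi\circ\tau}{2}$ together with (\ref{pr3}), not the case hypothesis $g(s)^{2}\neq d(s)$ under which it appears in Case 2 of the proof of Lemma 3.1(2), so your invocation of it is legitimate. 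Second, your converse direction is genuinely needed for the ``are exactly the functions of the form'' reading of the statement, and it is complete: you verify that $g$ solves (\ref{eq6}), is abelian, satisfies (\ref{pr2}) (hence the conditions of Lemma 3.1(2)(ii)), and has $\int_{S}g(t)d\mu(t)=\int_{S}\chi(t)d\mu(t)\neq 0$, so that $Tg=\int_{S}\chi(t)d\mu(t)\,\frac{\chi+\chi\circ\tau}{2}$ is a non-zero abelian element of $\mathcal{K}$. The one dependence to flag is Theorem 3.2(3) (abelianness of $f$ is equivalent to abelianness of $g$), which you use in both directions: the paper states it but never proves it, as its proof of Theorem 3.2 only addresses parts (1) and (2). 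If you want your argument self-contained, observe that $f$ and $g$ are non-zero scalar multiples of one another, so each is abelian exactly when the other is; with that remark added, your proof stands on its own.
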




\begin{thebibliography}{20}
\bibitem{akk}  Akkouchi, M. and    Bakali, A., Une g\'en\'eralisation des paires de
Guelfand. Boll. Un. Math. Ital. \textbf{7 }(1992), 759-822.
\bibitem{akk9} Akkouchi, M.,
Bakali, A. and Khalil, I., A Class of Functional Equations on a
Locally Compact Group. J. London Math. Soc. (1998),   \textbf{57}
(3), 694-705.
\bibitem{davison} Davison, T. M. K., D'Alembert's functional equation
on topological monoids. Publ. Math. debrecen \textbf{75} 1/2 (2009),
41-66.
 \bibitem{akk2} Elqorachi, E. and
Akkouchi, M., On generalized d'Alembert and Wilson functional
equations. Aequationes Math. (2003), \textbf{66}(3), 241-256
 \bibitem{K}  Kannappan, Pl. A functional equation for the cosine.  Canad. Math.
Bull. \textbf{2} (1968), 495-498.
\bibitem{P}   Perkins, A.M. and   Sahoo, P.K., On two functional equations
with involution on groups related to sine and cosine functions.
 Aequationes Math. (2014). doi:10.1007/ s00010-014-0309-z.
\bibitem{St3}  Stetk\ae r, H., Van Vleck's functional equation for the
sine.  Aequationes Math. (2014). doi:10.1007/ s00010-015-0349-z.
\bibitem{stkan}  Stetk{\ae}r, H.,
Kannappan's functional equation on semigroups with involution.
Semigroup Forum, First online: 23 September 2015.
\bibitem{07}  Stetk{\ae}r, H.,
Functional Equations on Groups. World Scientific Publishing Co,
Singapore (2013).
\bibitem{stetkaer}  Stetk\ae r, H., A variant of d'Alembert's
functional equation. Aequationes Math., (2014), DOI
10.1007/s00010-014-0253-y.
\bibitem{V1}  Vleck Van, E.B., A
functional equation for the sine. Ann. Math. Second Ser. \textbf{11}
(4), 161-165 (1910).
 \bibitem{V2}  Vleck  Van, E.B., A functional equation for the
sine. Additional note. Ann. Math. Second Ser. \textbf{13} (1/4), 154
(1911-1912).
\end{thebibliography}
\end{document}